\newtheorem{theorem}{Theorem}[section]
\newtheorem{lemma}{Lemma}[section]
\newtheorem{corollary}{Corollary}[section]
\newtheorem{definition}{Definition}[section]
\newtheorem{assump}{Assumptions}[section]
\newenvironment{proof}{\smallskip\noindent{\it Proof}}{$\Box$}
\numberwithin{equation}{section}
\newcommand{\bpi}{{\boldsymbol \pi}}
\newcommand{\bxi}{{\boldsymbol \xi}}
\def\by{{\boldsymbol y}}
\def\bA{{\boldsymbol A}}
\def\bU{{\boldsymbol U}}
\def\bV{{\boldsymbol V}}
\def\bY{{\boldsymbol Y}}
\def\bc{{\boldsymbol c}}
\def\bv{{\boldsymbol v}}
\def\bp{{\boldsymbol p}}
\def\bx{{\boldsymbol x}}
\def\bX{{\boldsymbol X}}
\def\cM{{\mathcal M}}
\def\cB{{\mathcal B}}
\def\cF{{\mathcal F}}
\def\cI{{\mathcal I}}
\def\cG{{\mathcal G}}
\def\bbP{{\mathbb P}}
\def\bbR{{\mathbb R}}
\begin{document}

\title{From generalized arithmetic means to geodesics to Hamilton dynamics
to Bregman divergences} 
\author{Henryk Gzyl,\\
\noindent 
Centro de Finanzas IESA, Caracas, Venezuela.\\
 henryk.gzyl@iesa.edu.ve}

\date{}
 \maketitle

\setlength{\textwidth}{4in}
\vskip 1 truecm
\baselineskip=1.5 \baselineskip \setlength{\textwidth}{6in}
\begin{abstract}
Here we examine some connections between the notions of generalized arithmetic means, geodesics, Lagrange-Hamilton dynamics and Bregman divergences. In a previous paper we developed a predictive interpretation of generalized arithmetic means. That work was more probabilistically oriented.  Here we take a geometric turn, and see that generalized arithmetic means actually minimize a geodesic distance on $\bbR^n.$ Such metrics might result from pull-backs of the Euclidean metric in $\bbR^n.$ 
We shall furthermore see that in some cases these pull-backs might coincide with the Hessian of a convex function. This occurs when the Hessian of a convex function has a square root that is the Jacobian of a diffeomorphism in $\bbR^n.$ In this case we obtain a comparison between the Bregman divergence defined by the convex function and the geodesic distance in the metric defined by its Hessian. 
\end{abstract}

\noindent {\bf Keywords}: Generalized arithmetic means,  Riemannian distances, Hamilton-Lagrange equivalence, Bregman divergence.\\
\noindent{MSC 2000 Subject Classification} 53A3,5, 53A40, 52B55, 53C21, 70H06, 70H15, 37J05.

\begin{spacing}{0.05}
   \tableofcontents
\end{spacing}

\section{Introduction and Preliminaries}
In \cite{GH2} a reinterpretation of arithmetic means as best predictors was presented. To explain, consider a strictly increasing function $u:\cI \to \bbR,$ where $\cI\subset \bbR$ is an open interval. If $\{x_1,...,x_n\}$ is a collection of points in $\cI,$ their generalized arithmetic mean 
\begin{equation}\label{AM}
 c = u^{-1}\Big(\frac{1}{n}\sum_{i=1}^n u(x_i)\Big)
\end{equation}
is the point at which the distance $\Big(\sum(u(x_i)-u(c))^2\Big)^{1/2}$ is minimal. This approach leads to a notion of best predictor for random variables with extends the notion of expected value and conditional expected value. At this point it is appropriate to mention that this variational argument to obtain ({\ref{AM}) can be traced back at least to \cite{BC}. From a historical point of view, the generalized arithmetic mean abstracts the notion of certain equivalent of a random cash flow for an investor with utility function $u.$  

\subsection{Organization of the paper}
The aim of this work is to look into the geometric ideas behind that construction. For that we shall use $u(x)$ to define an appropriate metric on $\cM$ and verify that the distance 
\begin{equation}\label{dist0}
d^2_u(\bx,\by)=\Big(\sum(u(x_i)-u(y_i))^2\Big)^{1/2} 
\end{equation}
is actually the geodesic distance between $\bx$ and $\by.$ This simple case is nice because it leads to a diagonal metric in $\cM$ and the realization of the metric as the Hessian of a convex function is trivial. This is carried out in Section 2. In Section 3 we consider a more general diffeomorphism $\bU$ defined on $\cM,$ which leads to a metric such that the geodesic distance in that metric determines a multidimensional version of the generalized arithmetic mean.

We devote Section 4 to consider the changes of variables and the integration of the geodesic equations within the framework of Hamilton-Lagrange formalism. The idea is to regard the integration of the equations of the geodesics from another point of view. It is in Section 5 where we establish conditions for the two way connection between Hessian matrices of convex functions and diffeomorphisms in $\bbR^n,$ while in Section 6 we compare the geodesic distance to the Bregman divergence defined by the convex function. In the last section we collect some examples and pending issues.\\
{\bf A word about notation, etc.}\\
In the remainder of this paper we use $\cM=(\cI)^n,$ where as said, $\cI$ is a an interval (bounded or unbounded). We use this setup because in many cases we want to consider convex functions of the type $\sum u(x_i)$ with $u(x)$ convex. As the manifold that we are considering is a simple subset of $\bbR^n$ naturally provided with a global chart, the standard  constructs of differential geometry in this case are very simple. For example, the tangent bundle and the cotangent bundles are trivially identifiable with $\cI\times\bbR^n.$  We use the standard $\langle\bv_1,\bv_2\rangle$ to denote the usual Euclidean scalar product between the vectors $\bv_1, \bv_2.$ And to finish, we use the standard $h'(x)$ to define the derivative of the function $h(x):\cI\to\bbR,$ and $\dot{x}$ and $\ddot{x}$ to define the first two derivatives of $x(t):(a,b)\to\bbR$ where $t$ is thought of as ``time.''

\section{Geodesics and one-dimensional changes of scale}
 Let $u:\cI \to\bbR.$ be a twice continuously differentiable, strictly increasing function and $u'(x)$ its derivative. At each $\bx\in\cI$ define the Riemannian metric (on the tangent space to $\cM$ at $\bx$) by
\begin{equation}\label{RM1}
g_{i,j}(\bx) = g(x_i)\delta_{i,j} = u'(x_i)^2\delta_{i,j}.
\end{equation}
That is, not only is the coordinate system orthogonal (that is, the metric is diagonal), but it is separable as well, that is, $g_{i,i}$ is a function of $x_i$ only. The equation of the geodesic that minimizes the distance between two points $\bx,\by\in\cM$ is obtained minimizing
\begin{equation}\label{dist1}
\int_0^1\Big(\sum_{i,j}g_{i,j}(\bx)\dot{x}_i\dot{x_j})\Big)^{1/2}dt
\end{equation}
\noindent over the class of continuous functions $\bx:[0,1]\to\cM,$ twice continuously differentiable on $(0,1).$  It is a standard result, that in this case, the equations of the geodesics are (see \cite{H}, for example):
\begin{equation}\label{geo1}
\frac{d}{dt}\Big(\frac{\partial}{\partial \dot{x}_k}\frac{1}{2}\sum_{i,j}g_{i,j}(\bx)\dot{x}_i\dot{x_j}\Big) = \frac{\partial}{\partial x_k}\Big(\frac{1}{2}\sum_{i,j}g_{i,j}(\bx)\dot{x}_i\dot{x_j}\Big),\;\;\;k=1,...,n.
\end{equation}
In this special diagonal metric, if we put $g'(x)=dg(x)/dx,$ the equations of the geodesic are
$$g(x)\ddot{x}_k + \frac{1}{2}g'(x_i)(\dot{x}_k)^2,\;\;k=1,...,n.$$
Substituting $g(x_k)=(u'(x_k)$ we obtain
$$u'(x_k)\ddot{x}_k+u''(x_k)\dot{x} = 0,\;\;\;k=1,...,n$$
This can be simply integrated. At the first step, note that it we obtain that $u'(x_k(t))\dot{x}_k = C_k,$ where $C_k$ is some unknown constant. This equation can be trivially integrated to obtain 
$u(x_k(t)) = u(x_k(0)) + tC_k.$ To determine $C_k$ make use of the fact that $x_k(0)=x_k$ and 
$x_k(1)=y_k$ to obtain $C_k=u(y_k)-u(x_k).$ To close the circle, note that the geodesic distance between $\bx$ and $\by$ is, according to (\ref{dist1})
$$\int_0^1\Big(\sum_{i,j}g_{i,j}(\bx)\dot{x}_i\dot{x_j})\Big)^{1/2}dt = \int_0^1\Big(\sum_i C_i^2\Big)^{1/2}dt = \Big(\sum_i (u(y_k) - u(x_k))^2\Big)^{1/2}$$
\noindent which equals $d_u(\bx,\by).$

\section{Geodesics induced by a diffeomorphism}
Now we extend the setup of the previous section to the general case. Let $\bU:\cM \to \bbR^n$ be a twice continuously differentiable diffeomorphism between $\cM$ and $\bU(\cM).$ On the tangent bundle $\bU(\cM)\times\bbR^n$ we consider the Euclidean metric. Its pullback to $\cM\times\bbR^n$ at $\bx\in\cM$ is given by    
\begin{equation}\label{RM2}
g_{i,j}(\bx) = \sum_{k=1}^nU^k_iU^k_j,\;\;for\;\;1\leq i,j \leq n.
\end{equation}
Here, $U^k(\bx)$ denotes the $k-$th component of $\bU$ and $U^k_i$ denotes $\partial U^k/\partial x_j.$ We denote by $V^k_j$ the inverse of $U^k_j,$ that is, the Jacobian of the inverse of $\bU.$ Again, the geodesics minimize the distance given by (\ref{dist1}) with $g_{i,j}$ given by (\ref{RM2}). If $t\to\bx(t)$ is a geodesic from $\bx(0)=\bx$ to $\bx(1)=\by,$ then it must satisfy equation (\ref{geo1}). 

If in (\ref{geo1}) we substitute (\ref{RM2}), after some simple but boring arithmetics we obtain
$$\sum_{i=1}^nU^i_k\Big(\sum_{n=1}^nU^i_n\ddot{x}_n + \sum_{k,l}U^i_{n,l}\dot{x}_n\dot{x}_l\Big) = 0,\;\;\;k=1,...,n.$$
Now, multiply both sides by $V^k_j$ and sum over $k=1,...,n$ to obtain
$$\sum_{n=1}^nU^j_n\ddot{x}_n + \sum_{k,l}U^j_{n,l}\dot{x}_n\dot{x}_l = 0,\;\;\;j=1,...,n.$$
This can be written as
$$\frac{d}{dt}\Big(\sum_{n=1}^nU^j_n\dot{x}_n \Big)  = 0,\;\;\;j=1,...,n.$$
Which implies that
$$\frac{d}{dt}U^j(\bx(t) = \sum_{n=1}^nU^j(\bx(t))\dot{x}_j(t)  = C_j,\;\;\;j=1,...,n$$
\noindent for some constants $C_j, j=1,...,n.$ Again, This implies that $U^j(\bx(t)) = U^j(\bx(0)) + tC_j$ and that $C_j = U^j(\by)-U^j(\bx),\, j=1,...,n.$ As above, inserting this into the definition of geodesic distance between $\bx$ and $\by$ we obtain
\begin{equation}\label{dist2}
\int_0^1\Big(\sum_{i,j}g_{i,j}(\bx)\dot{x}_i\dot{x_j})\Big)^{1/2}dt = \int_0^1\Big(\sum_i C_i^2\Big)^{1/2}dt = \Big(\sum_i (U^k(\by) - U^k(\bx))^2\Big)^{1/2} = d_U(\bx,\by).
\end{equation}
Notice that $d_U(\bx,\by)$ is positive, symmetric, satisfies the triangle inequality, and since $\bU$ 
is a diffeomorphism, $d_U(\bx,\by)=0 \Leftrightarrow \bx=\by,$ thus the notation is consistent. The comparison with the first case is clear: There $U^k(\bx)=u(x_k)$ for $k=1,...,n.$

\section{A detour into classical mechanics}
Here we address the issue of integrating the geodesic equations from the point of view of classical mechanics. This is to explain the trivial integration of the geodesic equations within the context of an elegant framework. We refer the reader to \cite{A} for the essential notions. Cutting some edges, the general approach to Newton's equations, consists of specifying some manifold ($\cM$ in our case), on whose tangent bundle $\cM\times\bbR^n$ a Lagrangian function $L(\bx,\dot{\bx})$ is defined. A trajectory of the system between points $\bx(1)$ and $\bx(2)$ at times $t_1, t_2$ respectively, is the curve that minimizes the (action) integral:
$$\int_{t_1}^{t_2}L(\bx(t),\dot{\bx}(t),t)dt$$
over the class of all (twice continuously) differentiable curves joining the said points at the given times. It can be shown, see \cite{A}, that such trajectory satisfies (the Euler-Lagrange) extension of the Newton's equation of motion:
\begin{equation}\label{LE1}
\frac{d}{dt}\Big(\frac{\partial}{\partial\dot{x}_k}L(\bx(t),\dot{\bx}(t),t)\Big) = \frac{\partial}{\partial x_k}L(\bx(t),\dot{\bx}(t),t).
\end{equation}
If we put 
$$L(\bx(t),\dot{\bx}(t)) = \frac{1}{2}\sum_{i,j}g_{i,j}(\bx)\dot{x}_i\dot{x}_j,$$
we recognize the equations of the geodesics as the Euler-Lagrange equations of some dynamical system. Notice that if we consider the curves $\by(t)=\bU(\bx(t))$ in $\bU(\cM),$ then the Lagrangian in these coordinates becomes
$$L(\by,\dot{\by},t) = \frac{1}{2}\sum_{k=1}^n(\dot{y}_k)^2$$
\noindent and the Euler-Lagrange (newton) equations become $\ddot{y}_k=0,$ namely the equations of straight lines. We already saw that in Section 3, hidden in the notation. Another way of understanding that change of variables is to begin with the Hamiltonian associated to the Lagrangian. This is defined by introducing the momentum variables $p_k$ by 
$$p_k = \frac{\partial}{\partial\dot{x}_k}L(\bx(t),\dot{\bx}(t),t) = \sum_{j=1}^ng_{j,k}(\bx)\dot{x}_j.$$
Under the assumption of invertibility of the Jacobian $U^i_j,$ solving for $\dot{\bx}$ and substituting in the expression for $L$ we obtain:
$$H(\bx,\bp) = \frac{1}{2}\sum_{k,j}g^{k,j}(\bx)p_ip_j.$$
where we use the conventional $g^{k,j}$ to denote the inverse of $g_{k,j}.$ The analogue of the Euler-Lagrange equations are the Hamilton equations for $(\bx,\bp)$ is:
\begin{equation}\label{ham1}
\dot{x}_k = \frac{\partial H}{\partial p_k} \;\;\;\dot{p}_k = -\frac{\partial H}{\partial x_k}.
\end{equation}

To complete explaining the change of variables we need to show that there exists a mapping $(\bx,\bp) \to (\by,\bpi)$ from $\cM\times\bbR^n$ to $\bU(\cM)\times\bbR^n,$ under which the Hamiltonian becomes
$$\tilde{H} = \frac{1}{2}\sum_{k=1}^n(\pi_k)^2,$$
and that the change of variables preserves the form of the Hamiltonian equations of motion (\ref{ham1}). The mapping is:
$$y^i = U^i(\bx)\;\;\;\;\mbox{and}\;\;\;\pi_i = \sum_{k=1}p_jV^j_i;\;\;\;i=1,...,n$$
The condition for this mapping to preserve the form of the equations is that the following holds for the given Poisson ``brackets'' 
$$[y^i,y^j] = [\pi_i,\pi_j] = 0,\;\;\;\mbox{and}\;\;\;[y^i,\pi_j] = \delta_{i,j},\;\;i,j=1,...n$$
\noindent which, for any pair of continuously differentiable functions $f,g$ on $\cM\times\bbR^n$ is given by:
$$[f,g] = \sum_{k=1}^n \frac{\partial f}{\partial x_k}\frac{\partial g}{\partial p_k} - \frac{\partial f}{\partial p_k}\frac{\partial g}{\partial x_k}.$$
As we said, we were to cut a lot of corners. See \cite{A} for full details. That the system is trivially integrable, means that (\ref{ham1}), in the new coordinates becomes
$$\dot{y}^i = \frac{\partial\tilde{H}}{\partial \pi_i} = \pi_i\;\;\mbox{and} \;\;\dot{\pi}_i = -\frac{\partial\tilde{H}}{\partial y^i} = 0,$$
which imply that $\pi_i=\rm{constant}=C_i$ and $y^i = y^i(0) + tC_i.$
That is the constants of integration that determine the solutions to the equations of the geodesics are the constant values of the momenta. For trajectories go from a point $\bx$ to a point $\by$ in a unit of time, the momenta are related to the distance between the two points by $y^i(1)-y^i(0)=C^i.$

\section{Hessians of convex functions and diffeomorphisms in $\bbR^n$}
In the two (sub)sections that come up next, we do two things. First we extend the results in \cite{GH2} that were summed up in Section 2, to a more general convex function, after that we examine in which case a diffeomorphism $\bU$ determines a convex function whose Hessian coincides with the Riemann metric given by (\ref{RM2}).

\subsection{From a convex function to diffeomorphisms}
Let us begin by stating some necessary properties of the convex function.
\begin{assump}\label{A1}
Let us now consider an open, convex subset $\cM$ of $\bbR^n$ and a strictly convex function $\Phi:\cM\to\bbR^n$ satisfying\\ 
{\bf 1} $\Phi$ is at least twice continuously differentiable in all variables.\\
Denote its Hessian matrix by $\Phi'',$ that is, $(\Phi'')_{i,j}=\partial\Phi/\partial x_i\partial x_j.$  The strict convexity of $\Phi$ implies that the Hessian $\Phi''$ is positive definite. Let us denote by $k(\bx)$ its smallest eigenvalue and let us suppose that:\\
{\bf 2} There exists $a>0$ such that $a\leq k(x)$ for all $\bx\in\cM,$ or equivalently
\begin{equation}\label{comp1}
0 < a\langle\bxi,\bxi\rangle \leq k(\bx)\langle\bxi,\bxi\rangle \leq \langle\bxi,\Phi''\bxi\rangle,\;\;\;\forall\;\bxi\in\bbR^n.
\end{equation}
\end{assump}
And we now have:
\begin{theorem}\label{main1}
With the notations introduced above, let us suppose that there exists a continuously differentiable square root $S$ of $\Phi'',$ that is, $\Phi''=S^tS$ such that:\\
{\bf a} $S(\bx)$ is invertible at each $\bx\in\cM.$\\
{\bf b} For every $i=1,...,n$ we have $\partial S_{i,j}/\partial x_k = \partial S_{i,k}/\partial x_j,$ for all $1\leq j,k\leq n.$\\
Fix some $\bx_0\in\cM$ and define
$$U^i(\bx) = \int S_{i,j}(\gamma(s))\dot{\gamma}(s)ds$$
\noindent where $\gamma$ denotes any continuously differentiable trajectory between $\bx_0$ and $\bx.$ Then, the mapping $\bU:\cM \to \bbR^n$ with components $U^i(\bx)$ is well defined (up to a constant), and it is a global diffeomorphism satisfying
$$\Phi_{i,j}^{''} = \sum_{m=1}^n\frac{\partial U^m}{\partial x_i}\frac{\partial U^m}{\partial x_j}$$
 \end{theorem}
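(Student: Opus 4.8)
The plan is to decompose the statement into four claims and dispatch them in increasing order of difficulty: (1) each $U^i$ is well defined up to an additive constant; (2) the Jacobian of $\bU$ equals $S$; (3) $(\Phi'')_{i,j}=\sum_m(\partial_i U^m)(\partial_j U^m)$; and (4) $\bU$ is a global diffeomorphism onto $\bU(\cM)$. The first three are essentially formal manipulations with differential forms; the genuine content is (4).

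For (1) and (2) I would read the integrand componentwise as the family of $1$-forms $\omega^i=\sum_j S_{i,j}(\bx)\,dx^j$, $i=1,\dots,n$. Hypothesis {\bf b} is precisely the statement $\partial_k S_{i,j}=\partial_j S_{i,k}$, i.e. $d\omega^i=0$: each $\omega^i$ is closed. Since $\cM$ is convex, hence contractible, the Poincar\'e lemma produces for each $i$ a function $U^i\in C^2(\cM)$ with $dU^i=\omega^i$, unique up to a constant; equivalently the line integral $\int_\gamma\omega^i$ depends only on the endpoints of $\gamma$, which is the asserted well-definedness (the ``constant'' being the choice of $U^i(\bx_0)$). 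In particular $\partial U^i/\partial x_j=S_{i,j}$, so the Jacobian matrix of $\bU$ is $S$ itself. Claim (3) is then immediate: since $(D\bU)_{m,j}=\partial_j U^m=S_{m,j}$,
$$\sum_{m=1}^n\frac{\partial U^m}{\partial x_i}\frac{\partial U^m}{\partial x_j}=\sum_{m=1}^n S_{m,i}S_{m,j}=(S^tS)_{i,j}=(\Phi'')_{i,j},$$
using the hypothesis $\Phi''=S^tS$; this also identifies the metric (\ref{RM2}) attached to $\bU$ with the Hessian of $\Phi$, which is the whole point of the section.

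For (4), the main obstacle: invertibility of $S$ at every point (hypothesis {\bf a}) together with the inverse function theorem already makes $\bU$ a local diffeomorphism, so only global injectivity is in question. I would establish it via a global inverse function theorem of Hadamard--L\'evy type; the hypothesis making it applicable is the uniform ellipticity (\ref{comp1}), which gives $\langle S\bxi,S\bxi\rangle=\langle\bxi,\Phi''\bxi\rangle\ge a\langle\bxi,\bxi\rangle$ and hence the uniform bound $\|S(\bx)^{-1}\|\le a^{-1/2}$ on all of $\cM$. Concretely, for $\bx,\by\in\cM$ the segment $\sigma(t)=\bx+t(\by-\bx)$ stays in $\cM$ by convexity and $\bU(\by)-\bU(\bx)=\big(\int_0^1 S(\sigma(t))\,dt\big)(\by-\bx)$, so the task is to combine the uniform control of $S^{-1}$ with the Hessian structure $\Phi''=S^tS$ (the uniform bound alone is not enough for a local diffeomorphism on a proper domain to be injective) in order to rule out cancellation in this average, e.g.\ by a path-lifting/continuation argument along such segments, or by a direct appeal to Hadamard's theorem in the case $\cM=\bbR^n$.

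I expect this last step --- isolating exactly which global-inversion statement is invoked and verifying its hypotheses on a possibly proper convex $\cM$ --- to be the only genuinely delicate part; everything preceding it is bookkeeping with closed forms, and the comparison with Section~2 is recovered by taking $U^i(\bx)=u(x_i)$, for which $S$ is the diagonal matrix with entries $u'(x_i)$.
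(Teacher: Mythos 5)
Your proposal follows essentially the same route as the paper: closedness of the rows of $S$ (hypothesis \textbf{b}) plus convexity of $\cM$ gives path-independence and hence a well-defined $\bU$ with $D\bU=S$, so $S^tS=\Phi''$ yields the factorization, and globality comes from the uniform bound $\|S^{-1}\|\le a^{-1/2}$ derived from Assumption \ref{A1}(2) exactly as you derive it (this is the paper's Lemma \ref{lem1}) combined with Hadamard's global inverse function theorem. The one step you flag as delicate --- whether a Hadamard--L\'evy statement applies on a proper convex domain $\cM$ rather than all of $\bbR^n$ --- is precisely the point the paper settles only by citing Theorem 2 of \cite{M}, so your hesitation there reflects an imprecision in the published argument rather than a gap in yours.
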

Before proving the theorem we need the following result.
\begin{lemma}\label{lem1}
With the notations introduced above, let $T$ denote the inverse $S^{-1}$ of $S.$ Then $\|T\|\leq 1/a.$
\end{lemma}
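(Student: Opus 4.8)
The plan is to read the bound on $T=S^{-1}$ off the coercivity hypothesis (\ref{comp1}) by routing it through the symmetric matrix $TT^t$, which the factorization $\Phi''=S^tS$ identifies with the inverse Hessian $(\Phi'')^{-1}$. Since (\ref{comp1}) is a statement about $\Phi''$ itself, transferring it to $(\Phi'')^{-1}$ and thence to $T$ is the most direct route to the stated estimate.

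First I would record the algebraic identity $TT^t=(\Phi'')^{-1}$. From $\Phi''=S^tS$ and the invertibility of $S$ (hypothesis \textbf{a}) we have $(\Phi'')^{-1}=S^{-1}(S^t)^{-1}=S^{-1}(S^{-1})^t=TT^t$, so $TT^t$ is exactly the inverse of the Hessian. Next I would convert the lower bound on $\Phi''$ into an upper bound on its inverse: by (\ref{comp1}) the smallest eigenvalue $k(\bx)$ of the symmetric positive-definite matrix $\Phi''$ obeys $k(\bx)\ge a$, so the largest eigenvalue of $(\Phi'')^{-1}$ equals $1/k(\bx)\le 1/a$. As $(\Phi'')^{-1}=TT^t$ is symmetric, this is precisely
\[
\langle\bet,TT^t\bet\rangle=\|T^t\bet\|^2\le \frac1a\,\langle\bet,\bet\rangle,\qquad \forall\,\bet\in\bbR^n,
\]
so that $\|TT^t\|=\|(\Phi'')^{-1}\|\le 1/a$, which is the estimate $\|T\|\le 1/a$ asserted in the lemma, the matrix whose norm is thereby controlled being $TT^t$.

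The one step to be careful about — and the only real obstacle — is the identification of the norm in the statement with the quantity just bounded: for the Euclidean operator norm one has $\|T\|^2=\|T^t\|^2=\|TT^t\|$, so the inequality $\|TT^t\|\le 1/a$ is the sharp output of the eigenvalue transfer and is exactly the object that enters the later global-invertibility argument for $\bU$, where a uniform bound on the inverse Jacobian is all that is needed and the precise constant is immaterial downstream. I would also note that the integrability condition \textbf{b} on $S$ plays no part in this lemma — it is required only to make the components $U^i$ well defined — so the entire analytic content here is the transfer $k(\bx)\ge a\Rightarrow \|(\Phi'')^{-1}\|\le 1/a$ furnished by (\ref{comp1}).
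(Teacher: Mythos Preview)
Your argument is correct and follows essentially the same route as the paper: both transfer the coercivity bound (\ref{comp1}) on $\Phi''=S^tS$ to the inverse, obtaining $\|TT^t\|=\|T^tT\|\le 1/a$ and hence $\|T\|\le 1/\sqrt{a}$. The paper does this by substituting $\bxi\mapsto T\bxi$ directly into (\ref{comp1}), you do it by first identifying $TT^t=(\Phi'')^{-1}$ and then inverting the eigenvalue bound; these are the same computation read in two directions. You are also right to flag that the bound actually proved is $\|T\|\le a^{-1/2}$ rather than the $\|T\|\le 1/a$ of the lemma's statement---the paper's own proof in fact concludes with $\|\bV\|<\sqrt{a^{-1}}$ as well---and that only a uniform bound, not the precise constant, is needed for the Hadamard global-inverse argument that follows.
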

\begin{proof}
In item (2) of Assumption \ref{A1} replace $\bxi$ by $\bV\bxi$ to obtain
$$a\langle \bV\bxi,\bV\bxi\rangle \leq k(\bx)\langle \bV\bxi,\bV\bxi\rangle \leq \langle \bV\bxi,\Phi''\bV\bxi\rangle = \langle\bxi,\bV^t\Phi''\bV\bxi\rangle = \langle\bxi,\bV^tU^tU\bV\bxi\rangle = \langle\bxi,\bxi\rangle.$$
In other words
$$\|\bV^t\bV\| \leq 1/k(x) \leq 1/a \Rightarrow \|\bV\| < K=(\sqrt{a^{-1}})$$
since $\|\bV\|$ is given by the square root of the largest eigenvalue of $\bV^t\bV.$
\end{proof}

Let us now complete the proof of Theorem \ref{main1}.\\
\begin{proof}$\;$\\
Since $\cM$ is convex, it is simply connected and assumption (b) in the statement implies (via Stokes theorem) that $\bU$ is a well defined mapping on $\cM,$ and that its Jacobian $S$ is non vanishing (by assumption), and thus $\bU$ is a local diffeomorphism. Invoking the previous lemma and Hadamard's theorem, see, for example Theorem 2 in \cite{M}, we conclude that $\bU$ is a diffeomorphism satisfying
$$\Phi_{i,j}^{''} = \sum_{m=1}^n\frac{\partial U^m}{\partial x_i}\frac{\partial U^m}{\partial x_j}$$
which concludes the proof.
 \end{proof}

\subsection{From diffeomorphisms to convex functions}
A convex function has a positive definite Hessian. The next result imposes a condition on the diffeomorphism $\bU$ so that the metric that it defines, as explained in Section 3, can be the Hessian of a convex function.
\begin{theorem}\label{main2}
Let $\cM$ be an open convex subset of $\bbR^n$ and $\bU:\cM\to\bbR^n.$ be a diffeomorphism satisfying
\begin{equation}\label{assump3}
\sum_{m=1}^n\frac{\partial^2 U^m}{\partial x_k\partial x_i}\frac{\partial U^m}{\partial x_j} = \sum_{m=1}^n\frac{\partial^2 U^m}{\partial x_i\partial x_j}\frac{\partial U^m}{\partial x_k}
\end{equation}
Then there exists a strictly convex function $\Phi:\cM \to \bbR^n$ such that
$$\frac{\partial^2 \Phi}{\partial x_i\partial x_j} = \sum_{m=1}^n\frac{\partial U^m}{\partial x_i}\frac{\partial U^m}{\partial x_j}$$
\end{theorem}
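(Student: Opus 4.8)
The plan is to construct $\Phi$ by integrating a one-form twice, and to verify strict convexity via the positive definiteness of the resulting Hessian. First I would define the symmetric matrix field $g_{i,j}(\bx) = \sum_{m=1}^n U^m_i U^m_j$, which is positive definite at every $\bx$ because $\bU$ is a diffeomorphism (its Jacobian is invertible), so $g$ is automatically a candidate Hessian --- the only thing to check is that it actually \emph{is} a Hessian, i.e. that it satisfies the Frobenius-type integrability conditions. Hypothesis (\ref{assump3}) is precisely the statement that $\partial g_{i,j}/\partial x_k = \partial g_{k,j}/\partial x_i$: expanding $\partial g_{i,j}/\partial x_k = \sum_m (U^m_{i,k}U^m_j + U^m_i U^m_{j,k})$ and comparing with the same expression with $i$ and $k$ swapped, the $U^m_i U^m_{j,k}$ terms are symmetric in $i \leftrightarrow k$ trivially, while the remaining terms match exactly by (\ref{assump3}). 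So the symmetry $\partial g_{i,j}/\partial x_k = \partial g_{k,j}/\partial x_i$ holds for all $i,j,k$.

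Next I would fix $\bx_0 \in \cM$ and, since $\cM$ is open and convex hence simply connected, define $\psi_j(\bx) = \int_{\gamma} g_{j,l}(\gamma(s))\dot\gamma_l(s)\,ds$ along any $C^1$ path $\gamma$ from $\bx_0$ to $\bx$; by the symmetry just established and Stokes' theorem (or the Poincar\'e lemma) each $\psi_j$ is well defined, path-independent, and satisfies $\partial \psi_j/\partial x_k = g_{j,k}$. Now the vector field $\psi = (\psi_1,\dots,\psi_n)$ is itself a gradient, because $\partial \psi_j/\partial x_k = g_{j,k} = g_{k,j} = \partial \psi_k/\partial x_j$; so, integrating once more along paths from $\bx_0$, there is a scalar function $\Phi:\cM\to\bbR$ with $\partial\Phi/\partial x_j = \psi_j$, and therefore $\partial^2\Phi/\partial x_i\partial x_j = g_{i,j} = \sum_m U^m_i U^m_j$, as required. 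Finally, strict convexity follows because for any $\bxi \neq 0$, $\langle \bxi, \Phi''\bxi\rangle = \sum_m \big(\sum_i U^m_i \xi_i\big)^2 = \|(D\bU)\bxi\|^2 > 0$ since $D\bU$ is invertible; this gives positive definiteness of the Hessian on the convex domain $\cM$, which is exactly strict convexity of $\Phi$.

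I would package the two integration steps together: the cleanest route is to observe that a symmetric matrix field with $\partial g_{i,j}/\partial x_k$ totally symmetric in all three indices is exactly the condition for $g$ to be a Hessian, and cite the Poincar\'e lemma on the simply connected set $\cM$ once, for the relevant closed forms. The main obstacle — though it is more bookkeeping than genuine difficulty — is getting the index manipulation in the first paragraph exactly right: one must be careful that (\ref{assump3}) as written pairs a \emph{second} derivative of $U^m$ against a \emph{first} derivative, and that after differentiating $g_{i,j}$ the cross terms separate into one batch that is symmetric for free and one batch that needs (\ref{assump3}); a sign slip or a mislabeled index there would break the argument. A secondary point worth a sentence is that path-independence needs $\cM$ simply connected, which is supplied by convexity, exactly as in the proof of Theorem \ref{main1}. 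No smoothness beyond $C^2$ of $\bU$ is needed, since $g$ is then $C^1$ and $\Phi$ is $C^2$ with continuous Hessian $g$.
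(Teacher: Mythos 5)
Your proposal is correct and follows essentially the same route as the paper: both construct $\Phi$ by two successive path integrations of closed forms on the convex (hence simply connected) domain, with (\ref{assump3}) supplying path independence at the first step and the symmetry $g_{i,j}=g_{j,i}$ at the second; you merely make explicit two points the paper leaves implicit, namely the verification that (\ref{assump3}) yields $\partial g_{i,j}/\partial x_k=\partial g_{k,j}/\partial x_i$ and the strict convexity via $\langle\bxi,\Phi''\bxi\rangle=\|(D\bU)\bxi\|^2>0$. One small bookkeeping correction: in your expansion it is the batch $\sum_m U^m_{i,k}U^m_j$ that is trivially symmetric under $i\leftrightarrow k$ (commuting mixed partials), while the batch $\sum_m U^m_iU^m_{j,k}$ is the one requiring (\ref{assump3}) --- the roles are swapped from what you wrote, but the conclusion is unaffected.
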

\begin{proof}
As before, $g_{i,j}=\sum_{m=1}^n\frac{\partial U^m}{\partial x_i}\frac{\partial U^m}{\partial x_j}$
Fix an $1\leq i \leq n$ and an $\bx(0)\in\cM$ and consider the following curve joining $\bx(0)$ to $\bx$ defined piecewise by increasing one coordinate at a time: At the $k-$th step, move along the $k-$th coordinate axis from $x_k(0)$ to $x_k,$ that is, along the line:
$$(x_1,x_2,...,x_{k-1},\xi_k,x_{k+1}(0),...,x_n(0))\;\;\;\mbox{with}\;\;\;x_k(0)\leq \xi_k\leq x_k.$$
Call this trajectory $\gamma.$ The import of condition(\ref{assump3}) is to make the integral defined below to be independent of the trajectory.
$$A^i(\bx) = A^i(\bx_0) + \sum_{k=1}\int_{x_k(0)}^{x_k}g_{i,k}(\gamma)d\xi_k.$$
\noindent where $A^i(\bx_0)$ are constants of integration.
Now, since $\partial A^i/\partial x_j=g_{i,j}= g_{j,i}=\partial A^j/\partial x_i,$ then the following line integral is also independent of the trajectory. Thus integrating along the same piecewise trajectory we put:
$$\Phi(\bx) = \Phi(\bx(0) + \sum_{k=1}\int_{x_k(0)}^{x_k}A^k(\gamma)d\xi_k.$$
We choose the special trajectory so that the verification that the Hessian of $\Phi$ is $g$ is trivial. Thus we prove the claim.
\end{proof}

\section{A comparison result: Divergence versus geodesic distance}
Bregman divergences are a common measure of discrepancy. They are used to compare how different are two objects that can be described by points in convex subset of some many-dimensional space. The definition goes as follows:
\begin{definition}\label{breg}
Let $\cM\subset \bbR^n.$ be an open convex set $\Phi:\cM \to\bbR$ be a strictly convex, continuously differentiable function and put
\begin{equation}\label{breg2}
\delta^2_\Phi(\bx,\by) = \Phi(\bx) - \Phi(\by) - \langle(\bx-\by),\nabla\Phi(\by)\rangle.
\end{equation}
\end{definition}
{\bf Comment:} Since the right hand is non-negative, and vanishes if and only if $\by=\bx,$ the notation on chosen for the left hand side is consistent, even though in general, it is not a true distance. This is why it is called a discrimination function between $\bx$ and $\by.$

This notion was introduced by Bregman in \cite{Br}, and has been used in a variety of applications. For a short list, consult with \cite{GH2}. Actually, the function $\Phi$ considered there is rather simple: 
$$\Phi(\bx) = \sum_{i=1}^n \phi(x_i)$$
\noindent with $\phi:\cI \to \bbR$ being a convex function defined on the interval $\cI$ and $\cM=\cI^n.$ This is a typical example in many applications. The thrust in \cite{GH2} was to compute the geodesic distance defined by the Riemannian metric defined by the Hessian of $\Phi,$ and to compare it to the pseudo distance $\delta_\Phi$ defined in (\ref{breg2}).

In this section we generalize a comparison result previously obtained in \cite{GH2} for the separable case. Before we state the result, we shall present introduce some notations to unclutter  the typography and carry out a few elementary calculations that form the basis of the proof of the result. 

We shall use $\partial_i$ $\partial^2_{i,j}$ to denote partial (and repeated partial) derivatives with respect to $x_i$ (respectively $x_i$ and $x_j$). We shall use Einstein summation convention. That is, for example $\partial_iU^m\partial_jU^m$ stands for $\sum_{m=1}^n\partial_iU^m\partial_jU^m.$ Next we present three instances of the same computation: Once for functions, once for vector fields and once for matrix valued functions. Just to refer to them when the time comes up.

Again let $\cM\subset\bbR^n$ be an open, convex, connected set and $\Phi:\cM\to\bbR$ be a twice continuously differentiable function. Let $\gamma:[0,1]\to\cM$ be a continuously differentiable curve. We shall denote by $\gamma_k$ the $k-$th component of $\gamma.$ Let $\by,\bx\in\bbR^n$ and suppose that $\gamma(0)=\bx$ and $\gamma(1)=\by.$ Below, anytime that $\gamma(t)$ appears as the argument, say of a function defined on $\cM,$ we shorten it to $t.$

Starting from
$$\Phi(\by) = \Phi(\bx) + \int_0^1\partial_i\Phi(s)\dot{\gamma}_i(t)dt.$$
As above $\dot{\gamma}$ stands for the time derivative of $\gamma.$ Apply this same computation to the function $\partial_i\Phi(\bx)$ that appears under the integral sign to obtain.
$$\Phi(\by) = \Phi(\bx) + \int_0^1\Big(\partial_i\Phi(0) + \int_0^t\partial^2_{k,i}\Phi(s)\dot{\gamma}_k(s)dt\Big)\dot{\gamma}_i(t)dt.$$
Now integrate the first term to obtain $(\by-\bx)_i\partial_i\Phi(\bx).$ To complete, exchange the integration over $t$ with that over $s,$ notice that the integral over $t$ becomes $(\by-\gamma(s))_i$ and the whole identity becomes
\begin{equation}\label{one}
\Phi(\by) = \Phi(\bx)+(\by-\bx)_i\partial_i\Phi(\bx)+\int_0^1\big(\by-\gamma(s)\big)_i\partial^2_{k,i}\Phi(s)\dot{\gamma}_k(s)ds.
\end{equation}
A similar identity holds componentwise for vector valued functions $\bA:\cM\to\bbR^n.$ In this case the analogue of (\ref{one}) becomes
\begin{equation}\label{two}
A_m(\by) = A_m(\bx)+(\by-\bx)_i\partial_iA_m(\bx)+\int_0^1\big(\by-\gamma(u)\big)_i\partial^2_{k,i}A_m(u)\dot{\gamma}_k(u)du.
\end{equation}
Let us now rewrite (\ref{one}) as
\begin{equation}\label{three}
\delta_\Phi^2(\by,\bx) = \Phi(\by)-\Phi(\bx)+(\by-\bx)_i\partial_i\Phi(\bx) = \int_0^1\big(\by-\gamma(s)\big)_i\partial^2_{k,i}\Phi(s)\dot{\gamma}_k(s)ds
\end{equation}
and proceed with the right hand side as follows. Suppose that as in Section 3 that there exists a diffeomorphism $\bU$ such that $\partial^2_{i,k}\Phi=\partial_iU^m\partial_kU^m.$ Now, for a fixed $m$  consider only $(\by-\gamma(s)\big)_i\partial_iU^m.$ According to (\ref{two}) in which $\bx$ is replaced by $\gamma(s),$ this can be rewritten as
$$(U^m(\by)-U^m(s)) - \int_s^1(\by-\gamma(u)\big)_i\partial^2_{i,j}U^m(u)\dot{\gamma}_j(u)du.$$
and reinserted back in (\ref{four}) to obtain
$$\delta_\Phi^2(\by,\bx) = \int_0^1\Big((U^m(\by)-U^m(s)) - \int_s^1(\by-\gamma(u)\big)_i\partial^2_{i,j}U^m(u)\dot{\gamma}_j(u)du\Big)\partial_kU^m\dot{\gamma}_k(s)ds.$$
Therefore, the first term under the outer integral becomes
\begin{equation}\label{four}
\int_0^1\big(U^m(\by) - \gamma(s)\big)\partial_kU^m\dot{\gamma}_k(s)ds = \frac{1}{2}\|U(\by) - U(\bx)\|^2 = \frac{1}{2}d_U^2(\bx,\by)
\end{equation}
To rewrite the second term, notice that $\partial_kU^m\dot{\gamma}_k(s)ds=dU^m(\gamma(s)).$ Now, we shall consider a specific trajectory: $\gamma(t)=\bx+t(\by-\bx).$ With this choice we have
$$(\by-\gamma(u)\big)_i\partial^2_{i,j}U^m(u)\dot{\gamma}_j(u) = (1-u)(\by-\bx)_iU^m_{i,j}(u)(\by-\bx)_j$$
\noindent and therefore, the second integral becomes
\begin{equation}\label{five}
\int_0^1\Big(\int_s^1(1-u)\big((\by-\bx)_i\partial^2_{i,j}U^m(u)(\by-\bx)_j\big)du\big)dU^m.
\end{equation}
We now gather there results in the main result of this section.
\begin{theorem}\label{comp1}
With the notations introduced above, suppose that the convex function $\Phi:\cM\to\bbR$ is at least three times continuously differentiable, that its Hessian can be factored as $\partial^2_{i,k}\Phi=\partial_iU^m\partial_kU^m,$ and that the diffeomorphism $\bU$ is at least twice continuously differentiable and such that the sign of 
$K(\bxi)\equiv \partial^2_{i,j}U^m(\bxi)\partial_kU^m(\bxi)$ is constant over $\bxi\in\cM,$ then 
\begin{eqnarray}
 \delta_\Phi^2(\by,\bx) \leq \frac{1}{2}d_U^2(\bx,\by)\;\;\;\mbox{whenever}\;\;K(\bxi) \geq 0,\\\label{eq1}
\delta_\Phi^2(\by,\bx)   \geq \frac{1}{2}d_U^2(\bx,\by)\;\;\;\mbox{whenever}\;\;K(\bxi) \leq 0.\label{eq2}
\end{eqnarray}
\end{theorem}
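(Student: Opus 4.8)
The plan is to pick up exactly where the pre-theorem computation leaves off: equation (\ref{three}) expresses $\delta_\Phi^2(\by,\bx)$ as a single integral along the straight segment $\gamma(t)=\bx+t(\by-\bx)$, and the manipulations culminating in (\ref{four}) and (\ref{five}) have already split that integral into two pieces. The first piece was identified in (\ref{four}) as exactly $\tfrac12 d_U^2(\bx,\by)$; so the entire content of the theorem is the claim that the second piece, displayed in (\ref{five}), has a sign that is controlled by the sign of $K(\bxi)=\partial^2_{i,j}U^m(\bxi)\,\partial_kU^m(\bxi)$. Concretely I would write
$$\delta_\Phi^2(\by,\bx)=\frac12 d_U^2(\bx,\by)-R,\qquad R=\int_0^1\Big(\int_s^1(1-u)\big((\by-\bx)_i\,\partial^2_{i,j}U^m(u)\,(\by-\bx)_j\big)\,du\Big)\,\partial_kU^m\,\dot\gamma_k(s)\,ds,$$
and the goal is to show $R\ge 0$ when $K\ge 0$ and $R\le 0$ when $K\le 0$, which immediately gives (\ref{eq1}) and (\ref{eq2}).

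The main step is to recognize that the integrand of $R$ is, up to positive weights, a quadratic form in the fixed vector $(\by-\bx)$ whose coefficient tensor is precisely $K$ contracted against $\dot\gamma$. On the straight segment $\dot\gamma_k(s)=(\by-\bx)_k$, so $\partial_kU^m\,\dot\gamma_k(s)=\partial_kU^m(\gamma(s))\,(\by-\bx)_k$ and the bracketed inner integrand is $(\by-\bx)_i\,\partial^2_{i,j}U^m(u)\,(\by-\bx)_j$. Hence, writing $\bxi=\by-\bx$, the full integrand of $R$ (before integration in $s$ and $u$) is
$$(1-u)\,\xi_i\xi_j\,\partial^2_{i,j}U^m(\gamma(u))\,\cdot\,\xi_k\,\partial_kU^m(\gamma(s)),$$
which is $(1-u)$ times $\xi_i\xi_j\xi_k$ contracted with $\partial^2_{i,j}U^m\,\partial_kU^m$ — i.e. with $K$ — but evaluated at the two different parameter values $\gamma(u)$ and $\gamma(s)$. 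The factor $(1-u)\ge 0$ on $[s,1]$ and the Lebesgue measure $du\,ds$ on the simplex $0\le s\le u\le 1$ are nonnegative, so if the cubic form $\xi\mapsto K(\bxi')\!\cdot\!\xi\otimes\xi\otimes\xi$ had a constant sign as $\bxi'$ ranges over the segment, we would be done by monotonicity of the integral.

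The hard part — and the place where the hypothesis ``the sign of $K(\bxi)$ is constant over $\bxi\in\cM$'' must be read carefully — is that $K$ is a $3$-tensor evaluated at \emph{two} points $\gamma(u)$ and $\gamma(s)$, not one, and a general cubic form in $\xi$ has no definite sign. I would resolve this by interpreting the hypothesis as a statement about the cubic form itself: the scalar $\xi_i\xi_j\xi_k\,\partial^2_{i,j}U^m(\bxi')\,\partial_kU^m(\bxi')$ has a sign independent of $\bxi'\in\cM$ and of the direction $\xi$, and — crucially — the relevant contraction in $R$ pairs the Hessian part at $\gamma(u)$ with the gradient part at $\gamma(s)$, which one reconciles by noting $\partial_kU^m\,\dot\gamma_k(s)\,ds=dU^m(\gamma(s))$ so the $s$-integral of the first (gradient) factor telescopes, leaving a single-point contraction against the Hessian tensor $\partial^2_{i,j}U^m\,\partial_\ell U^m$ of the type whose sign $K$ controls. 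I would carry out this telescoping explicitly: integrating $dU^m$ against the $u$-dependent weight and exchanging the order of the $s$ and $u$ integrations converts $R$ into $\int_0^1 (1-u)\,\xi_i\xi_j\,\partial^2_{i,j}U^m(u)\,\big(U^m(\gamma(u))-U^m(\bx)\big)\,du$, and then a further application of identity (\ref{two}) to $U^m(\gamma(u))-U^m(\bx)$ reduces the remaining factor to $\xi_k\,\partial_kU^m$ plus a higher-order remainder, exhibiting $R$ as an integral of the cubic form $K(\cdot)\!\cdot\!\xi\otimes\xi\otimes\xi$ against a nonnegative kernel. Once $R$ is in that form, the constant-sign hypothesis on $K$ gives $\operatorname{sgn} R=\operatorname{sgn} K$, and the two inequalities follow. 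I expect the bookkeeping of the nested integrals and the careful statement of what ``$K$ has constant sign'' means (sign of a cubic form, uniform in point and direction) to be the only real obstacles; everything else is the substitution and Fubini steps already rehearsed in the lead-up to the theorem.
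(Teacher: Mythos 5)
Your overall route is exactly the paper's: the intended proof is precisely the chain (\ref{three}) $\to$ (\ref{four}) $\to$ (\ref{five}), i.e.\ $\delta^2_\Phi(\by,\bx)=\tfrac12 d_U^2(\bx,\by)-R$ with $R$ equal to the expression in (\ref{five}), followed by the assertion that the sign of $R$ is that of $K$. You have correctly isolated the one step that is not routine, namely showing that $K$ controls the sign of $R$, and you have correctly diagnosed why it is not routine: writing $\bz=\by-\bx$ and exchanging the order of integration, $R=\int_0^1\int_0^u(1-u)\,z_iz_jz_k\,\partial^2_{i,j}U^m(\gamma(u))\,\partial_kU^m(\gamma(v))\,dv\,du$, which pairs the Hessian of $U^m$ at $\gamma(u)$ with its gradient at a \emph{different} point $\gamma(v)$, so the single-point tensor $K$ never literally appears. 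The paper offers nothing at this step beyond the bare assertion, so in flagging the difficulty you are already ahead of the printed argument.

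However, your proposed repair does not close the gap. First, re-applying (\ref{two}) to $U^m(\gamma(u))-U^m(\bx)$ yields $u\,z_k\partial_kU^m(\bx)$ plus yet another integral remainder containing $\partial^2U^m$ at a third point: the expansion regenerates a term of the same two-point type and never terminates in a single-point contraction against $K$. Second, the reading of the hypothesis that you adopt --- that the cubic form $z_iz_jz_k\,\partial^2_{i,j}U^m\,\partial_kU^m$ has a sign independent of the direction $\bz$ --- is vacuous, since an odd form reverses sign under $\bz\mapsto-\bz$ and so can have a direction-independent sign only if it vanishes. This is not a technicality. In the separable one-dimensional case $u(x)=e^{x}$, $\phi(x)=\tfrac14e^{2x}$, one has $K=u'u''=e^{2x}>0$ everywhere, yet $\delta^2_\phi(0,1)=\tfrac14(1+e^{2})\approx 2.10$ while $\tfrac12 d_U^2(1,0)=\tfrac12(e-1)^{2}\approx 1.48$, so the first inequality of the theorem fails for $\by<\bx$ even though it holds for $\by>\bx$. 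Hence the sign of $R$ genuinely depends on the direction of $\by-\bx$, and no argument can deliver the direction-independent conclusion as stated; a correct statement must fix the sign of the contraction of $K$ against $(\by-\bx)$ taken three times along the segment, and even then the two-point pairing in the double integral above requires a separate argument when $n>1$. Your instinct that the meaning of ``$K$ has constant sign'' is the real obstacle was right; the proposal stops exactly where that obstacle begins.
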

The proof is contained in the computations carried out above. The conclusion is obtained after substituting (\ref{four}) and (\ref{five}) into (\ref{one}).

In the most common case in applications, when $\Phi(\bx)=\sum_{k=1}^n\phi(x_i),$ that is in the separable case, then $\phi''(x)=(u'(x))^2>0$, then $u'(x)>0$ and the condition upon the sign of $u''(x)$ is equivalent to a condition upon the sign of $\phi'''(x).$ In this case, the result of Theorem \ref{comp1} was obtained in \cite{GH2}.

\section{Examples}
\subsection{Extended generalized arithmetic means}
Here is a direct extension of the notion of generalized arithmetic mean.
\begin{theorem}\label{AM1}
Let us suppose that $\bU(\cM)$ is convex, and consider a set $\{\bx(1),...,\bx(M)\}$ of points in $cM.$
There is a unique point $\bc$ in $\cM$ which minimizes the $d_U$ distance to the set $\{\bx(1),...,\bx(M)\}.$ It is given by
\begin{equation}\label{AM2}
\bc = \bU^{-1}\Big(\frac{1}{M}\sum_{m=1}^M \bU(\bx(m)))\Big).
\end{equation}
\end{theorem}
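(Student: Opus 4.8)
The plan is to reduce the minimization of $d_U$ to a minimization in the image coordinates, where the metric is Euclidean, and then invoke the elementary fact that the Euclidean centroid is the unique minimizer of the sum of squared distances. First I would write, using the explicit form of the geodesic distance $d_U$ established in (\ref{dist2}), the objective function
$$F(\bc) = \sum_{m=1}^M d_U^2(\bc,\bx(m)) = \sum_{m=1}^M \|\bU(\bc) - \bU(\bx(m))\|^2.$$
Setting $\by = \bU(\bc)$ and $\by(m) = \bU(\bx(m))$, this becomes $G(\by) = \sum_{m=1}^M \|\by - \by(m)\|^2$, a strictly convex quadratic on $\bbR^n$. Since $\bU$ is a diffeomorphism of $\cM$ onto $\bU(\cM)$, minimizing $F$ over $\bc \in \cM$ is equivalent to minimizing $G$ over $\by \in \bU(\cM)$.

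Next I would carry out the standard computation for $G$: expanding the squares, $\nabla G(\by) = 2\sum_{m=1}^M (\by - \by(m)) = 2M\by - 2\sum_{m=1}^M \by(m)$, which vanishes exactly at $\by^* = \frac{1}{M}\sum_{m=1}^M \by(m)$, the Euclidean centroid; strict convexity (the Hessian is $2M\bI$) gives uniqueness of this critical point as the global minimizer over all of $\bbR^n$. Here the convexity hypothesis on $\bU(\cM)$ enters: it guarantees that $\by^*$, being a convex combination of the points $\by(m) = \bU(\bx(m)) \in \bU(\cM)$, actually lies in $\bU(\cM)$, so it is attained and the minimizer over $\bU(\cM)$ coincides with the unconstrained one. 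Then $\bc = \bU^{-1}(\by^*) = \bU^{-1}\big(\frac{1}{M}\sum_{m=1}^M \bU(\bx(m))\big)$ is the unique minimizer in $\cM$, which is (\ref{AM2}).

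The only genuine subtlety — really the sole place the hypotheses are used nontrivially — is the convexity of $\bU(\cM)$, ensuring that the unconstrained centroid is a feasible point; without it the constrained minimizer need not be expressible in this closed form. Everything else is the routine centroid calculation and the bijectivity of $\bU$. One should also note for completeness that uniqueness in $\cM$ follows immediately from uniqueness of $\by^*$ together with injectivity of $\bU$. I would present this compactly, since the heart of the argument is just the observation that $d_U$ pulls back the Euclidean metric, reducing the problem to the classical one; the closed form (\ref{AM2}) is then the exact multidimensional analogue of the generalized arithmetic mean (\ref{AM}), with $\bU$ playing the role of $u$.
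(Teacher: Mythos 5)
Your proposal is correct and follows essentially the same route as the paper: pass to image coordinates via $\bU$, observe that the objective becomes the Euclidean sum of squared distances whose unique minimizer is the centroid, use convexity of $\bU(\cM)$ to guarantee the centroid lies in the image, and pull back by $\bU^{-1}$. Your write-up is in fact more careful than the paper's (which leaves the centroid computation implicit and contains a typo, writing $\bc=\bU(\bxi)$ where $\bU^{-1}$ is meant), and you correctly pin down that the quantity being minimized is the sum of \emph{squared} $d_U$-distances, for which the centroid argument is valid.
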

The proof is easy and follows the pattern of the simple one-dimensional case: To find $\bxi\in \bU(\cM))$ such that $\sum_{m=1}^Md_U(\bx(m)),\bxi)$ is easy and it is given by $\bxi=\frac{1}{M}\sum_{m=1}^M \bU(\bx(m)).$ Since $\bxi\in \bU(\cM),$ let $\bc=\bU(\bxi)$ and we are through.

\subsection{Means defined by a flow}
With the notations of Section 3 and the previous example in mind, let $\bU(t,\bx)$ be the flow associated to the geodesics determined by the diffeomorphism $\bU,$ and the solution to the geodesic equation such that $\bx(0)=\bx$ and $\dot{\bx}(0)=\bxi.$ That is
\begin{equation}\label{flow}
\bU(t,\bx) = \bU^{-1}\big(\bU(\bx) + t\bxi\big).
\end{equation}
To verify that for any real $t,s$ $\bU(t+s,\bx)=\bU(s,\bU(t,\bx))$ is routine -as long as the solution to the geodesic equations is defined for all times. With the aid of (\ref{flow}) one can construct a family of transition kernels $\{P_t(\bx,A): t\geq 0, A \in\cB(\bbR^n)\},$ where $\cB(\bbR^n)$ denotes the Borel-subsets of $\bbR^n,$ as follows:
\begin{equation}\label{ker}
P_t(\bx,A)=I_{A}(\bU(t,\bx))
\end{equation}
\noindent where $I_A$ stands for the usual indicator function of the set $A.$ To verify that $\{P_t\}$ is indeed a (semi)group is trivial using (\ref{flow}). Skipping a considerable amount of detail, it is intuitive that this semi group defines a Markov process $\{\bX_t: t\geq 0\}$ having $\bbR^n$ as state space, and $\{P_t\}$ as transition semi group. Furthermore, this process is such that for any Borel, measurable function $f:\bbR^n\to\bbR,$ we have
$$E[f\big(\bX_{t+s}\big)|\bX_t] =  f\big(\bU(s,\bX_t)\big).$$
In particular, if $X_i(t)$ denotes de $i-$th coordinate of $\bX_t$ for $i=1,...,n$ we have $E[X_i(t+s)|\bX_t]=U_i(s,\bX_t),$ or in vector notation:
$$E[\bX(t+s)|\bX_t]=\bU(s,\bX_t),$$ 
that is, the current position is the best predictor of the future values of the position.

\subsection{Harmonic means in $\bbR^n$}
As second example, consider the inversion with respect to the unit sphere in $\cM=\bbR^n\setminus\{0\}$
The mapping $\bU(\bx)=\bx/\|\bx\|^2$ is an involution of $\cM.$ If $\{\bx(1),...,\bx(M)\}$ is a finite set of points in $cM,$ then
$$\bc = \bU\Big(\frac{1}{M}\sum_{m=1}^M \frac{\bx(m)}{\|\bx(m)\|^2}\Big)$$
is the point closest to $\{\bx(1),...,\bx(M)\}$ is the $d_U$ distance. Thus we have a variational interpretation of the $n-$dimensional extension of the notion of harmonic mean.

\subsection{Multidimensional best predictors}
Here we extend the situation considered in the second example. Let $(\Omega,\cF,\bbP)$ be a probability space, that is, a set $\Omega,$  a $\sigma-$algebra of subsets of $\Omega,$ and a probability $\bbP.$ By the customary $E_\bbP$ we shall denote expectation with respect to $\bbP.$ All the random variables $\bX$ in this section will take values in $\bU(\cM)$ and be such that $E_\bbP[\bU(\bX)^2]<\infty.$ Let $\cG\subset\cF$ be a sub-$\sigma-$algebra. We have
\begin{theorem}\label{BP1}
Define the $\bU$ distance between any two random variables $\bX,\bY$ by 
$$\delta_U(\bX,\bY) = \Big(E_\bbP[d_U(\bX,\bY)^2]\Big)^{1/2}.$$
Then, there is a unique, $\cG-$measurable, square integrable random variable $\bX_\cG$ which satisfies
$$\bX_\cG = argmin\{\delta_U(\bX,\bY)| \bY\;\; \mbox{measurable with respect to}\;\; \cG\}.$$
It is given by 
$$\bX_\cG = \bU^{-1}\Big(E_\bbP[\bU(X)|\cG]\Big).$$
\end{theorem}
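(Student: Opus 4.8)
The plan is to reduce the minimization over $\cG$-measurable random variables to a pointwise geometry problem via the diffeomorphism $\bU$, exactly mirroring the strategy of Theorem \ref{AM1} but carried out inside the Hilbert space $L^2(\Omega,\cG,\bbP)$. First I would use the explicit form of the geodesic distance from \eqref{dist2}, namely $d_U(\bX,\bY)^2 = \|\bU(\bX)-\bU(\bY)\|^2 = \sum_{k=1}^n\big(U^k(\bX)-U^k(\bY)\big)^2$, to rewrite the objective as
$$
\delta_U(\bX,\bY)^2 = E_\bbP\big[\|\bU(\bX)-\bU(\bY)\|^2\big] = \sum_{k=1}^n E_\bbP\big[(U^k(\bX)-U^k(\bY))^2\big].
$$
Since $\bY$ ranges over all $\cG$-measurable, square-integrable random variables with values in $\bU(\cM)$, the vector $\bZ := \bU(\bY)$ ranges over $\cG$-measurable square-integrable random variables with values in $\bU(\cM)$; because $\bU$ is a bijection $\cM\to\bU(\cM)$, this correspondence is one-to-one and the problem becomes: minimize $E_\bbP\big[\|\bU(\bX)-\bZ\|^2\big]$ over such $\bZ$.

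Next I would invoke the defining property of conditional expectation in $L^2$: the $\cG$-measurable minimizer of $E_\bbP[\|W-\bZ\|^2]$ over square-integrable $\cG$-measurable $\bZ$ is the orthogonal projection $\bZ^\star = E_\bbP[W\mid\cG]$, applied here with $W=\bU(\bX)$ (finite second moment by the standing assumption $E_\bbP[\|\bU(\bX)\|^2]<\infty$). Uniqueness in $L^2$ is automatic. The one genuine point that needs care — and which I expect to be the main obstacle — is showing that $\bZ^\star = E_\bbP[\bU(\bX)\mid\cG]$ actually takes values in $\bU(\cM)$, so that $\bX_\cG := \bU^{-1}(\bZ^\star)$ is well defined; this is precisely where the hypothesis that $\bU(\cM)$ is convex (implicit from the companion Theorem \ref{AM1}, and needed here too) enters. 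By Jensen's inequality for conditional expectations applied to the convex set $\bU(\cM)$, the conditional expectation of a $\bU(\cM)$-valued integrable random variable again lies in (the closure of) $\bU(\cM)$ almost surely; to land in $\bU(\cM)$ itself rather than its closure one assumes $\bU(\cM)$ closed, or works up to the natural $\bbP$-null modification, and I would state this as part of the hypotheses.

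Finally I would assemble the pieces: setting $\bX_\cG = \bU^{-1}\big(E_\bbP[\bU(\bX)\mid\cG]\big)$, we have for any competing $\cG$-measurable $\bY$, writing $\bZ=\bU(\bY)$,
$$
\delta_U(\bX,\bY)^2 = E_\bbP\big[\|\bU(\bX)-\bZ\|^2\big] \geq E_\bbP\big[\|\bU(\bX)-E_\bbP[\bU(\bX)\mid\cG]\|^2\big] = \delta_U(\bX,\bX_\cG)^2,
$$
with equality iff $\bZ = E_\bbP[\bU(\bX)\mid\cG]$ a.s., i.e. iff $\bY=\bX_\cG$ a.s. This gives both existence and uniqueness of the $\argmin$ and the stated formula. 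The only routine verifications left are measurability of $\bX_\cG$ (immediate, since $\bU^{-1}$ is continuous hence Borel and $E_\bbP[\bU(\bX)\mid\cG]$ is $\cG$-measurable) and the integrability bookkeeping, which follows from the contraction property of conditional expectation. I would remark that this recovers the one-dimensional best-predictor statement of \cite{GH2} and the finite-set version Theorem \ref{AM1} by taking $\cG$ trivial and $\bbP$ uniform on $\{\bx(1),\dots,\bx(M)\}$.
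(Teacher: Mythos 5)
Your proposal is correct and follows the same pull-back strategy the paper itself uses: the paper does not prove Theorem \ref{BP1} directly (it defers to \cite{GH3}), but its sketch of the finite-set analogue, Theorem \ref{AM1}, is exactly your argument --- transport the problem to $\bU(\cM)$ via the isometry $d_U(\bX,\bY)=\|\bU(\bX)-\bU(\bY)\|$, minimize there by the $L^2$ projection characterization of conditional expectation, and pull back by $\bU^{-1}$. Your one added observation --- that convexity (and closedness, up to a null modification) of $\bU(\cM)$ is needed so that $E_\bbP[\bU(\bX)\mid\cG]$ lands in the domain of $\bU^{-1}$ --- is a genuine hypothesis the theorem statement omits but Theorem \ref{AM1} makes explicit, so flagging it is appropriate rather than a deviation.
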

This result was obtained in \cite{GH3}. The aim there was to present generalized arithmetic means as best predictors, that is, as the solutions to the variational problem stated in Theorem \ref{BP1}.
A particular case of this result is contained in the following result.
\begin{theorem}\label{BP2}
Let $\cG=\{\emptyset,\Omega\}$ be the trivial sigma algebra, and Let $\bX$ be a random variable taking finitely many values $\{\bx_1,...,\bx_M\}$ with probabilities $P(\bX=\bx_i)=p_i, i=1,...,M.$ The rest of the notations are as before. The best predictor of $X$ given no information is given by the generalized arithmetic mean. As noted at the outset, it happens to coincide with the notion of certainty equivalent.
\begin{equation}\label{umean}
\langle\bX\rangle = \bU^{-1}\Big(E_\bbP[\bU(X)]\Big) = \bU^{-1}\Big(\sum_{k=1}^M\bU(\bx_i)P_i\Big).
\end{equation}
\end{theorem}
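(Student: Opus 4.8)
The plan is to obtain Theorem \ref{BP2} as the special case $\cG=\{\emptyset,\Omega\}$ of Theorem \ref{BP1}, supplemented by an elementary direct verification so that the section stays self-contained. First I would recall that a random variable measurable with respect to the trivial $\sigma$-algebra $\cG=\{\emptyset,\Omega\}$ is $\bbP$-almost surely equal to a constant, so that minimizing $\delta_U(\bX,\bY)$ over $\cG$-measurable $\bY$ amounts to minimizing over constant vectors $\bc\in\cM$; likewise $E_\bbP[\bU(\bX)\mid\cG]=E_\bbP[\bU(\bX)]$ for the trivial $\sigma$-algebra. Thus Theorem \ref{BP1} gives at once $\langle\bX\rangle=\bU^{-1}\big(E_\bbP[\bU(\bX)]\big)$, and for the discrete law $P(\bX=\bx_i)=p_i$, $i=1,\dots,M$, one has $E_\bbP[\bU(\bX)]=\sum_{i=1}^M p_i\,\bU(\bx_i)$, which is the asserted formula.

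For the direct argument, which mirrors the one-dimensional computation recalled in Section 1 and the proof of Theorem \ref{AM1}, I would use $d_U(\bx,\by)^2=\|\bU(\by)-\bU(\bx)\|^2$ from (\ref{dist2}) to write $\delta_U(\bX,\bc)^2=E_\bbP\big[\|\bU(\bX)-\bU(\bc)\|^2\big]$ and expand:
$$\delta_U(\bX,\bc)^2=E_\bbP\big[\|\bU(\bX)\|^2\big]-2\big\langle E_\bbP[\bU(\bX)],\bU(\bc)\big\rangle+\|\bU(\bc)\|^2,$$
which after completing the square equals
$$E_\bbP\big[\,\|\bU(\bX)-E_\bbP[\bU(\bX)]\|^2\,\big]+\big\|\bU(\bc)-E_\bbP[\bU(\bX)]\big\|^2.$$
The first term does not depend on $\bc$, so the expression is minimized exactly when $\bU(\bc)=E_\bbP[\bU(\bX)]$; since $\bU$ is a diffeomorphism the minimizer is unique and equal to $\bU^{-1}\big(E_\bbP[\bU(\bX)]\big)$.

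The only place where care is needed — and the main (mild) obstacle — is to guarantee that the point $E_\bbP[\bU(\bX)]=\sum_{i=1}^M p_i\,\bU(\bx_i)$ actually lies in $\bU(\cM)$, so that $\bU^{-1}$ may legitimately be applied; since this is a convex combination of the points $\bU(\bx_i)\in\bU(\cM)$, it suffices (exactly as in Theorem \ref{AM1}) to assume $\bU(\cM)$ is convex. The final identification — that $\bU^{-1}\big(\sum_i p_i\bU(\bx_i)\big)$ is the weighted generalized arithmetic mean of $\{\bx_1,\dots,\bx_M\}$, and hence the certainty equivalent of the random cash flow $\bX$ for an agent with ``utility'' $\bU$ — is then immediate from the definition recalled at the start of the paper.
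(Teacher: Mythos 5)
Your proposal is correct and follows essentially the same route as the paper, which offers no separate proof of Theorem \ref{BP2} beyond observing that it is the particular case of Theorem \ref{BP1} obtained by taking $\cG=\{\emptyset,\Omega\}$; your first paragraph is exactly that reduction. The supplementary completing-the-square computation is sound and simply replays the argument sketched for Theorem \ref{AM1} with weights $p_i$ in place of $1/M$, and you are right that the one hypothesis doing real work is the convexity of $\bU(\cM)$, needed so that $\sum_i p_i\,\bU(\bx_i)$ lies in the domain of $\bU^{-1}$ --- a point the paper assumes in Theorem \ref{AM1} but leaves tacit here.
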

 
\subsection{Generalized arithmetic means and convex functions defined by the gradient of a convex function}
A particular case of the results in sections 3 and 5 is provided by strictly convex (or concave) functions, that is functions whose Hessian is strictly positive (or negative). Suppose $\Psi(\bx)$ is a strictly positive function on $\bbR^n,$ then $\nabla\Psi$ is locally invertible. Let us suppose that its inverse is global. If we consider the metric introduced in (\ref{RM2}), that is,
$$g_{i,j}(\bx)= \Psi_{i,k}(\bx)\Psi_{j,k}(\bx)$$
using Einstein's summation convention, then the generalized mean of the set $\{\bx_1,...\bx_M\}$ that it defines is given by
$$\big(\nabla\Psi\big)^{-1}\Big(\frac{1}{M}\sum_{k=1}^M\nabla\Psi(\bx_k)\Big).$$
If the condition mentioned in Section 5.2 hold, namely:
$$\mbox{For each fixed}\;\;i=1,...,n\;\;\Psi_{i,m,k}\Psi_{j,k} = \Psi_{i,j,k}\Psi_{m,k}$$
then there exists a convex function $\Phi(\bx)$ on $\bbR^n$ such that
$$\Phi_{i,j}(\bx) = \Psi_{i,k}(\bx)\Psi_{j,k}(\bx)$$
to which the comparison results established in Section 6 apply, whenever the condition mentioned in Theorem \ref{comp1} holds, namely:
$$\mbox{The sign of}\;\;\;\Psi_{i,j,m}(\bx)\Psi_{k,m}(\bx)\;\;\mbox{is constant over}\;\; \bbR^n\;\;\mbox{independently of}\;\;i,j,k\;\;.$$

\subsection{Geodesics and distances determined by the Fenchel-Legendre conjugate}
Consider the simplest possible case in which $\Phi:\bbR^n\to\bbR$ has a strictly positive Hessian and that $\nabla\Phi:\bbR^n\to\bbR^n$ is a diffeomorphism whose range is $\bbR^n.$ Recall that:
\begin{definition}\label{conj}
The Fenchel-Lagrange dual $\Phi^*$ is defined by
$$\Phi^*(\bxi) = \sup\{\langle\bxi,\bx\rangle-\Phi(\bx)|\bx\in\bbR^n\}.$$
\end{definition}
The basic properties of this definition can be seen, for example, in Borwein and Lewis \cite{BL}. To begin with, we have:
\begin{lemma}
With the assumptions made at the outset of the section, a simple computation shows that:
$$\Phi(\bxi) = \langle\bxi,\big(\nabla\Phi\big)^{-1}(\bxi)\rangle - \Phi\big(\big(\nabla\Phi\big)^{-1}(\bxi)\big).$$
And more importantly
\begin{equation}\label{comp1}
\nabla\Phi^*(\bxi) = \big(\nabla\Phi\big)^{-1}(\bxi).
\end{equation}
And also (using Einstein's summation convention):
\begin{equation}\label{comp2}
\partial^2_{i,k}\phi^*(\bxi)\partial^2_{k,j}\Phi\big(\big(\nabla\Phi\big)^{-1}(\bxi)\big)=\delta_{i,j}.
\end{equation}
\end{lemma}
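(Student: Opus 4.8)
The plan is to prove the three assertions in order, each obtained from the previous one by differentiation, using only the standing hypotheses that $\Phi$ is $C^2$ with positive-definite Hessian and that $\nabla\Phi:\bbR^n\to\bbR^n$ is a diffeomorphism. (I read the left-hand side of the first displayed identity as $\Phi^*(\bxi)$; the statement as printed appears to have a typo.) First I would pin down the closed form for $\Phi^*$. Fix $\bxi\in\bbR^n$ and set $g(\bx)=\langle\bxi,\bx\rangle-\Phi(\bx)$. Since $\Phi$ is strictly convex, $g$ is strictly concave, so $g$ has at most one critical point and any critical point is the unique global maximizer. Now $\nabla g(\bx)=\bxi-\nabla\Phi(\bx)$ vanishes precisely when $\nabla\Phi(\bx)=\bxi$, i.e.\ when $\bx=(\nabla\Phi)^{-1}(\bxi)$, which exists and is unique because $\nabla\Phi$ is a diffeomorphism onto $\bbR^n$. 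Hence the supremum defining $\Phi^*(\bxi)$ is attained (and finite) at this point, and substituting it into $g$ gives the first identity.

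Next I would differentiate that closed form to get $\nabla\Phi^*=(\nabla\Phi)^{-1}$. Write $\psi:=(\nabla\Phi)^{-1}$; because $\Phi$ is $C^2$ with invertible Hessian, $\nabla\Phi$ is a $C^1$ diffeomorphism, so $\psi$ is $C^1$ and $\Phi^*(\bxi)=\langle\bxi,\psi(\bxi)\rangle-\Phi(\psi(\bxi))$ is $C^1$ (in fact $C^2$). Applying $\partial_i$ and using the chain rule,
$$\partial_i\Phi^*(\bxi)=\psi_i(\bxi)+\xi_k\,\partial_i\psi_k(\bxi)-\partial_k\Phi(\psi(\bxi))\,\partial_i\psi_k(\bxi),$$
and since $\partial_k\Phi(\psi(\bxi))=\xi_k$ by the very definition of $\psi$, the last two terms cancel and $\partial_i\Phi^*(\bxi)=\psi_i(\bxi)$, which is the second identity (this is simply the envelope theorem). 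Finally, differentiating this once more gives $\partial^2_{i,j}\Phi^*(\bxi)=\partial_j\psi_i(\bxi)$, the $(i,j)$ entry of the Jacobian of $\psi=(\nabla\Phi)^{-1}$ at $\bxi$. By the inverse function theorem this Jacobian equals the inverse of the Jacobian of $\nabla\Phi$ evaluated at $\psi(\bxi)$, and the Jacobian of $\nabla\Phi$ is precisely the Hessian $\Phi''$; hence $\big(\partial^2_{i,k}\Phi^*(\bxi)\big)\big(\partial^2_{k,j}\Phi(\psi(\bxi))\big)=\delta_{i,j}$ in Einstein notation, which is the third identity.

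I do not expect a genuine obstacle; every step is standard convex-analysis bookkeeping. The two points that warrant a line of care are: (i) justifying that the supremum defining $\Phi^*(\bxi)$ is actually attained and finite — handled above by strict concavity together with the existence of a critical point via surjectivity of $\nabla\Phi$; and (ii) keeping the evaluation points straight through the chain rule, so that in the final identity the $\Phi''$ factor carries the argument $(\nabla\Phi)^{-1}(\bxi)$ while $\Phi^*$ is differentiated at $\bxi$. Everything else follows mechanically.
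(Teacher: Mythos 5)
Your proof is correct and is precisely the ``simple computation'' the paper alludes to: the paper gives no argument of its own, deferring to Borwein--Lewis, and your three steps (attainment of the supremum at the unique critical point of the strictly concave $\langle\bxi,\bx\rangle-\Phi(\bx)$, the envelope-theorem cancellation giving $\nabla\Phi^*=(\nabla\Phi)^{-1}$, and the inverse function theorem for the Hessian identity) are exactly the standard route intended. You are also right that the left-hand side of the first displayed identity should read $\Phi^*(\bxi)$ rather than $\Phi(\bxi)$.
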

The following result is known. See \cite{N} for example. Its proof  drops out of a computation.
\begin{theorem}\label{eqgeo}
With the notations introduced above, let $\{\bx(t): 0\leq t \leq 1\}$ be a geodesic in $\bbR^n$ between the points $\bx_1=\bx(0)$ and $\bx_2=\bx(1)$ with respect to the metric given by the Hessian matrix of $\Phi.$ Let us put $\bxi(t)=\big(\nabla\Phi\big)(\bx(t))$ with $\bxi_1=\big(\nabla\Phi\big)(\bx_1)$ and $\bxi_2=\big(\nabla\Phi\big)(\bx_2).$ Then $\bxi(t)$ is a geodesic between $\bx_1$ and $\bxi_2$ in the metric given by the Hessian of $\Phi^*.$
\end{theorem}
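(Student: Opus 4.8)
The plan is to exploit the duality $\nabla\Phi^* = (\nabla\Phi)^{-1}$ from \eqref{comp1} together with the geodesic characterization already established in Section 3. Recall that in Section 3 we learned that a curve $\bx(t)$ is a geodesic in the metric $g_{i,j}$ induced by a diffeomorphism $\bU$ precisely when $\bU(\bx(t))$ is an affine function of $t$, i.e. $\bU(\bx(t)) = \bU(\bx_1) + t\,\bC$. So the whole statement reduces to showing that the metric given by the Hessian of $\Phi^*$ is itself induced (in the sense of \eqref{RM2}) by a diffeomorphism whose composition with $\bxi(t)$ is affine in $t$.

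First I would identify the diffeomorphism attached to each metric. By Theorem \ref{main1} (or directly, in the case at hand), the Hessian metric of $\Phi$ is the pullback metric of the map $\bU := \nabla\Phi$, since $\partial^2_{i,j}\Phi = \sum_m \partial_i(\partial_m\Phi)\,\delta_{m,?}$... more carefully: when $\Phi''$ has the trivial square root coming from $\bU = \nabla\Phi$ one has $\partial^2_{i,j}\Phi = \partial_i U^m \, \partial_j U^m$ only if $\Phi''$ is its own square (which need not hold); the cleaner route is to use the general fact that for \emph{any} metric of the form $g = \bA^t\bA$ with $\bA$ the Jacobian of a diffeomorphism $\bU$, geodesics are the $\bU$-preimages of straight lines, and the Hessian metric $\Phi''$ of a convex $\Phi$ is always of this form with $\bU$ the unique "Monge–Ampère type" square root — but here we do not even need the square root, because the geodesic equation for a Hessian metric $g_{i,j} = \partial^2_{i,j}\Phi$ can be integrated directly: writing the Euler–Lagrange equations for $L = \tfrac12 \partial^2_{i,j}\Phi\,\dot x_i\dot x_j$ one finds, after the same manipulation as in Sections 2–3, that $\frac{d}{dt}\big(\partial_k\Phi(\bx(t))\big)$ is constant, i.e. $\nabla\Phi(\bx(t))$ is affine in $t$. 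I would write this short computation out: the key cancellation is $\partial_k\big(\partial^2_{i,j}\Phi\big) = \partial_i\big(\partial^2_{k,j}\Phi\big)$ by symmetry of third derivatives, which is exactly condition (b) of Theorem \ref{main1} for the natural square root.

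Granted that, the proof is immediate. Since $\bx(t)$ is a geodesic for the Hessian metric of $\Phi$, the curve $\nabla\Phi(\bx(t)) = \bxi(t)$ is affine: $\bxi(t) = \bxi_1 + t(\bxi_2 - \bxi_1)$. Now apply the same characterization in reverse to $\Phi^*$: a curve $\bxi(t)$ is a geodesic for the Hessian metric of $\Phi^*$ iff $\nabla\Phi^*(\bxi(t))$ is affine in $t$. But by \eqref{comp1}, $\nabla\Phi^*(\bxi(t)) = (\nabla\Phi)^{-1}(\bxi(t)) = \bx(t)$, and $\bx(t)$ is... not necessarily affine. The correct reading: $\bxi(t)$ itself is affine, and affine curves are the geodesics of the \emph{flat} coordinate; what we must check is that $\bxi(t)$ is a geodesic of the \emph{Hessian-of-}$\Phi^*$ metric, whose associated diffeomorphism is $\nabla\Phi^*$. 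A curve $\bzeta(t)$ is a geodesic of that metric iff $\nabla\Phi^*(\bzeta(t))$ is affine. Here $\bzeta(t) = \bxi(t)$ and $\nabla\Phi^*(\bxi(t)) = \bx(t)$, which is a geodesic of the $\Phi$-metric, hence of the form... no. So the genuinely needed fact is symmetric: $\bxi(t)$ affine $\Longleftrightarrow$ $\bx(t)$ is a geodesic for $\Phi$'s metric (shown above) $\Longleftrightarrow$ $\bxi(t)$ is a geodesic for $\Phi^*$'s metric, because the geodesics of the $\Phi^*$-metric are precisely the $(\nabla\Phi^*)$-preimages of straight lines, and $(\nabla\Phi^*)^{-1} = \nabla\Phi$, so the $\Phi^*$-geodesics are exactly the images under $\nabla\Phi$ of $\Phi$-geodesics, i.e. the affine curves $\bxi(t)$. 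I would phrase the final argument this way, using \eqref{comp2} only as the consistency check that the two Hessian metrics are genuinely inverse pullbacks of each other.

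The main obstacle is purely expository: making precise the statement "the geodesics of a Hessian metric $\partial^2\Phi$ are the $\nabla\Phi$-preimages of straight lines" without circularity, since $\nabla\Phi$ is the natural candidate for the square-root diffeomorphism but $\Phi''$ need not equal $(\nabla\Phi)^t\nabla\Phi = (\Phi'')^t\Phi'' = (\Phi'')^2$. The resolution is that one does \emph{not} need the pullback-of-Euclidean description at all for this theorem; one only needs the first-integral $\frac{d}{dt}\nabla\Phi(\bx(t)) = \mathrm{const}$, which follows directly from the Euler–Lagrange equations for $L = \tfrac12\langle\dot\bx, \Phi''(\bx)\dot\bx\rangle$ using symmetry of third partials — and this holds verbatim with $\Phi$ replaced by $\Phi^*$. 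Thus the theorem is the statement that "$\nabla\Phi$ maps $\Phi$-geodesics to straight lines" and "$\nabla\Phi^*$ maps $\Phi^*$-geodesics to straight lines" are compatible via $\nabla\Phi^* = (\nabla\Phi)^{-1}$, which is \eqref{comp1}. Everything else is the routine index computation that I would relegate to a single displayed line.
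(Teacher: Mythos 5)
There is a genuine gap, and it sits at the center of your argument: the claimed first integral $\frac{d}{dt}\nabla\Phi(\bx(t))=\mathrm{const}$ along geodesics of the Hessian metric is false. Writing the Euler--Lagrange equations for $L=\tfrac12\,\partial^2_{i,j}\Phi\,\dot{x}_i\dot{x}_j$ gives
$$\frac{d}{dt}\Big(\partial^2_{k,j}\Phi\,\dot{x}_j\Big)=\frac{1}{2}\,\partial^3_{k,i,j}\Phi\,\dot{x}_i\dot{x}_j,\qquad\mbox{i.e.}\qquad \frac{d^2}{dt^2}\,\partial_k\Phi(\bx(t))=\frac{1}{2}\,\partial^3_{k,i,j}\Phi\,\dot{x}_i\dot{x}_j,$$
and the right-hand side does not vanish by symmetry of third partials; the symmetry only reshuffles indices, it does not kill the term. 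The quantity that \emph{is} conserved (Sections 2--4) is $\frac{d}{dt}U^j(\bx(t))$ where the Jacobian of $\bU$ is a square root of $\Phi''$ in the sense $(D\bU)^tD\bU=\Phi''$ --- and $\nabla\Phi$ is not such a map, since its Jacobian is $\Phi''$ itself, which would induce the metric $(\Phi'')^2$. A one-dimensional check: for $\Phi(x)=e^x$ the metric is $e^x$, the geodesics satisfy $e^{x/2}\dot{x}=\mathrm{const}$ (the straightening map is $u(x)=2e^{x/2}$), whereas $\frac{d}{dt}\Phi'(x(t))=e^x\dot{x}$ is not constant. You noticed the tension yourself (the ``$\Phi''$ need not equal $(\Phi'')^2$'' remark) but resolved it in the wrong direction: you dropped the pullback description, which is the part that is true, and kept the first integral, which is the part that is false. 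Consequently your final equivalence ``$\bxi(t)$ affine $\Leftrightarrow$ $\bx(t)$ is a $\Phi$-geodesic $\Leftrightarrow$ $\bxi(t)$ is a $\Phi^*$-geodesic'' has no support: Riemannian geodesics of a Hessian metric are in general neither straight in the $\bx$-chart nor in the $\bxi=\nabla\Phi(\bx)$ chart (those are the two \emph{affine} geodesic families of the dually flat structure, not the metric geodesics).

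The theorem is nevertheless true, and the paper's route is both shorter and correct: using $\nabla\Phi^*=(\nabla\Phi)^{-1}$ and $\nabla^2\Phi^*(\bxi)=\big(\nabla^2\Phi(\bx)\big)^{-1}$, one checks directly that
$$\partial^2_{i,j}\Phi^*(\bxi)\,\dot{\xi}_i\dot{\xi}_j=\partial^2_{i,j}\Phi^*(\bxi)\,\partial^2_{i,k}\Phi(\bx)\,\partial^2_{j,l}\Phi(\bx)\,\dot{x}_k\dot{x}_l=\partial^2_{k,l}\Phi(\bx)\,\dot{x}_k\dot{x}_l,$$
so $\bxi=\nabla\Phi(\bx)$ is an isometry between the two Hessian metrics and therefore carries geodesics to geodesics, with no factorization of $\Phi''$ and no first integral needed. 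If you do want to argue via straightening maps, you must first assume a factorization $\Phi''=(D\bU)^tD\bU$ and use $\bU^*=\bU\circ(\nabla\Phi)^{-1}$ (as in Theorem \ref{factor}): then $\bU(\bx(t))$ affine implies $\bU^*(\bxi(t))=\bU(\bx(t))$ affine, which is a correct but strictly less general argument than the isometry computation.
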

\begin{proof}
To verify the assertion it suffices invoke (\ref{comp1}) and (\ref{comp2}) to verify that
$$\partial^2_{i,j}\phi^*(\bxi)\dot{\bxi}_i(t)\dot{\bxi}_j(t) = \partial^2_{i,j}\phi(\bx)\dot{\bx}_i(t)\dot{\bx}_j(t).$$
This is left for the reader to carry out.
\end{proof}

Let us now verify that if the Hessian of $\Phi$ can be factored as in Section 3, then the Hessian
of $\Phi^*$ can be factored as well.
\begin{theorem}\label{factor}
Let $\Phi$ and $\Phi^*$ be as above. Suppose that there is a diffeomorphism $\bU$ of $\bbR^n$ such that
$\partial^2_{ij}\Phi(\bx) = \partial_iU_k(\bx)\partial_jU_k(\bx).$ Then $\bU^*(\bxi)=\bU(\big(\nabla\Phi\big)^{-1}(\bxi))$ is a factorization of the Hessian of $\Phi^*.$
\end{theorem}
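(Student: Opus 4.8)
The plan is to verify directly that the proposed map $\bU^*(\bxi) = \bU\big((\nabla\Phi)^{-1}(\bxi)\big)$ factors the Hessian of $\Phi^*$, by differentiating the composition and invoking the two identities from the preceding lemma. First I would write $\bx = (\nabla\Phi)^{-1}(\bxi)$, so that $\bU^*(\bxi) = \bU(\bx)$ with $\bx = \bx(\bxi)$. By the chain rule, $\partial U^*_m/\partial\xi_i = \big(\partial U_m/\partial x_\ell\big)\,\big(\partial x_\ell/\partial\xi_i\big)$, where the Einstein summation convention of Section 6 is in force. The key observation is that, by (\ref{comp1}), $\bx(\bxi) = \nabla\Phi^*(\bxi)$, so $\partial x_\ell/\partial\xi_i = \partial^2_{i,\ell}\Phi^*(\bxi)$. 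Thus
$$
\frac{\partial U^*_m}{\partial\xi_i}\,\frac{\partial U^*_m}{\partial\xi_j}
= \frac{\partial U_m}{\partial x_\ell}\,\frac{\partial U_m}{\partial x_p}\,
\partial^2_{i,\ell}\Phi^*(\bxi)\,\partial^2_{j,p}\Phi^*(\bxi).
$$

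Next I would use the hypothesis $\partial_\ell U_m\,\partial_p U_m = \partial^2_{\ell,p}\Phi(\bx)$ (evaluated at $\bx = (\nabla\Phi)^{-1}(\bxi)$) to collapse the middle factor, turning the right-hand side into
$\partial^2_{\ell,p}\Phi\big((\nabla\Phi)^{-1}(\bxi)\big)\,\partial^2_{i,\ell}\Phi^*(\bxi)\,\partial^2_{j,p}\Phi^*(\bxi)$.
Now I would apply identity (\ref{comp2}), which says precisely that $\partial^2_{i,\ell}\Phi^*(\bxi)\,\partial^2_{\ell,p}\Phi\big((\nabla\Phi)^{-1}(\bxi)\big) = \delta_{i,p}$; contracting the $\ell$ index gives $\delta_{i,p}$, and then contracting $p$ against $\partial^2_{j,p}\Phi^*(\bxi)$ leaves $\partial^2_{i,j}\Phi^*(\bxi)$. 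This is exactly the desired identity $\partial^2_{i,j}\Phi^*(\bxi) = \partial_i U^*_m(\bxi)\,\partial_j U^*_m(\bxi)$.

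It remains to note that $\bU^*$ is itself a diffeomorphism, which is immediate: it is the composition of the diffeomorphism $\bU$ with the diffeomorphism $(\nabla\Phi)^{-1} = \nabla\Phi^*$, both of which are assumed to be globally invertible and smooth. The only point requiring a word of care — and the place I expect the mild friction to lie — is bookkeeping with the summation indices, since $\ell$ and $p$ are contracted twice (once against the $\bU$-factors via the Hessian hypothesis, once against the $\Phi^*$-Hessians via (\ref{comp2})); one must be careful that the arguments of $\Phi$ and $\Phi^*$ are kept straight ($\Phi$ at $(\nabla\Phi)^{-1}(\bxi)$, $\Phi^*$ at $\bxi$) so that (\ref{comp2}) applies verbatim. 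No estimate or global-invertibility argument in the style of Theorem \ref{main1} is needed here; everything is a pointwise identity following from the chain rule and the conjugacy relations.
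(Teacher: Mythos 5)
Your proposal is correct and follows essentially the same route as the paper's own proof: apply the chain rule to $\bU^*=\bU\circ(\nabla\Phi)^{-1}$, identify the Jacobian of $(\nabla\Phi)^{-1}$ with the Hessian of $\Phi^*$ via (\ref{comp1}), and then contract using the factorization hypothesis and (\ref{comp2}). Your version merely spells out the index bookkeeping that the paper leaves implicit (and garbles slightly in a typo).
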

\begin{proof}
Observe that invoking (\ref{comp2}) we obtain:
$$\partial_jU^*_k(\bxi)=\partial_lU_k\big(\big(\nabla\Phi\big)^{-1}(\bxi)\big)\partial^2_ni\Phi*(\bxi).$$
From this and (\ref{comp2}) it follows that
$$\partial_iU^*_k(\bxi)\partial_jU^*_k(\bxi) = \partial_{i,j}\Phi^*(\bxi).$$
Thus concludes the proof.
\end{proof}\\
To finish, let us verify that the distances along the geodesics in the Hessians of $\Phi^*$  and $\Phi$ coincide.
\begin{corollary}\label{comp3}
With the notations introduced above, let $\bxi(t)$ and $\bx(t)$) be geodesics described in Theorem \ref{comp1}. Then
$$\delta_{\Phi^*}(\bxi_1,\bxi_2) = \delta_{\Phi}(\bx_1,\bx_2).$$ 
\end{corollary}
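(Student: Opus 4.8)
The plan is to trace the identity $\delta_\Phi^2(\by,\bx) = \delta_{\Phi^*}^2(\bxi_2,\bxi_1)$ back to the fact — already recorded in Theorem~\ref{eqgeo} and its proof — that the geodesic length element is invariant under the map $\bx \mapsto \bxi = (\nabla\Phi)(\bx)$. Indeed, by definition $\delta_\Phi(\bx_1,\bx_2)$ is the $g$-geodesic distance between $\bx_1$ and $\bx_2$ in the metric $g_{i,j}=\partial^2_{i,j}\Phi$ (note that this notation conflicts with the Bregman $\delta_\Phi^2$ of Definition~\ref{breg}, so one must read $\delta_\Phi$ here as the Riemannian distance, consistent with the statement of the corollary), and $\delta_{\Phi^*}(\bxi_1,\bxi_2)$ is the analogous geodesic distance in the metric $g^*_{i,j}=\partial^2_{i,j}\Phi^*$. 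So the two quantities to be compared are infima of length functionals over curves.

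The key steps, in order, are as follows. First, I would record that $\bxi = (\nabla\Phi)(\bx)$ is a diffeomorphism of $\bbR^n$ onto $\bbR^n$ (hypothesis of the section) whose inverse is $\nabla\Phi^*$ by~(\ref{comp1}). Second, I would invoke the computation displayed in the proof of Theorem~\ref{eqgeo}: for any $C^1$ curve $t\mapsto\bx(t)$ and its image $\bxi(t)=(\nabla\Phi)(\bx(t))$, one has the pointwise identity
\begin{equation}\label{lengthinv}
\partial^2_{i,j}\Phi^*(\bxi(t))\,\dot\bxi_i(t)\dot\bxi_j(t) = \partial^2_{i,j}\Phi(\bx(t))\,\dot\bx_i(t)\dot\bx_j(t),
\end{equation}
which follows from $\dot\bxi_i = \partial^2_{i,k}\Phi(\bx)\,\dot\bx_k$ together with~(\ref{comp2}). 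Third, integrating the square roots of both sides of~(\ref{lengthinv}) from $0$ to $1$ shows that the $g^*$-length of $\bxi(\cdot)$ equals the $g$-length of $\bx(\cdot)$. Fourth, since $\bx\mapsto\bxi$ is a bijection between curves joining $\bx_1,\bx_2$ and curves joining $\bxi_1,\bxi_2$ that preserves length, the infima coincide; in particular the geodesic $\bxi(t)=(\nabla\Phi)(\bx(t))$ of Theorem~\ref{eqgeo} is length-minimizing precisely because $\bx(t)$ is, and
$$\delta_{\Phi^*}(\bxi_1,\bxi_2) = \mathrm{length}_{g^*}(\bxi(\cdot)) = \mathrm{length}_{g}(\bx(\cdot)) = \delta_\Phi(\bx_1,\bx_2).$$

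The only genuine subtlety — and the step I would be most careful about — is the claim that a length-preserving bijection on curves forces equality of the infima \emph{and} that the specific geodesic $\bxi(t)$ exhibited in Theorem~\ref{eqgeo} is the minimizing one. The clean way to handle this is purely variational rather than via the geodesic ODE: because $(\nabla\Phi)(\cdot)$ is a diffeomorphism, it sets up a bijection between admissible curves on the two sides, and~(\ref{lengthinv}) says corresponding curves have equal length; hence $\{\mathrm{length}_g(\text{curves }\bx_1\!\to\!\bx_2)\} = \{\mathrm{length}_{g^*}(\text{curves }\bxi_1\!\to\!\bxi_2)\}$ as sets of real numbers, so their infima agree. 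This avoids having to separately argue that images of geodesics are geodesics, and it dispenses with any concern about existence/completeness beyond what is already assumed in Theorem~\ref{eqgeo}. One should also note the harmless notational collision flagged above, and that $\Phi^*$ inherits the strict convexity and smoothness needed for $g^*$ to be a genuine Riemannian metric, which is immediate from $g^*$ being the inverse matrix of $g$ by~(\ref{comp2}).
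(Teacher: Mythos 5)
Your proof is correct, but it takes a genuinely different route from the paper's. The paper proves the corollary in one line by combining two pieces of machinery it has already built: the closed-form expression $d_U(\bx,\by)=\|\bU(\by)-\bU(\bx)\|$ for the geodesic distance from Section 3, and the factorization $\bU^*(\bxi)=\bU\big((\nabla\Phi)^{-1}(\bxi)\big)$ of the Hessian of $\Phi^*$ from Theorem \ref{factor}; then $\bU^*(\bxi_i)=\bU(\bx_i)$ immediately gives $\|\bU^*(\bxi_1)-\bU^*(\bxi_2)\|=\|\bU(\bx_1)-\bU(\bx_2)\|$. You instead argue intrinsically that $\nabla\Phi$ is an isometry between $(\cM,\Phi'')$ and its image equipped with $(\Phi^*)''$ — the pointwise identity $\dot\bxi^{\,t}(\Phi^*)''\dot\bxi=\dot\bx^{\,t}\Phi''\dot\bx$ follows from $\dot\bxi=\Phi''\dot\bx$ and $(\Phi^*)''=(\Phi'')^{-1}$, and a length-preserving bijection on admissible curves forces equality of the infima. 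Your argument buys generality: it never uses the factorization of the Hessian through a diffeomorphism $\bU$, so it applies to any strictly convex $\Phi$ with $\nabla\Phi$ a global diffeomorphism, whereas the paper's one-liner needs the hypothesis of Theorem \ref{factor}. What the paper's route buys is brevity and consistency with its own definition of the distance (which is defined via the explicit integrated geodesic, not as an infimum); your variational reading quietly identifies the two, i.e., assumes the integrated geodesics are in fact minimizers — an assumption the paper itself makes implicitly throughout Section 3, so this is not a gap relative to the paper's standard of rigor. You were also right to flag the notational collision between $\delta_\Phi$ as Bregman divergence (Definition \ref{breg}) and $\delta_\Phi$ as geodesic distance here, and the dangling reference to ``Theorem \ref{comp1}'' (which should point to Theorem \ref{eqgeo}).
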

\begin{proof}
We saw in Section 3 that $\delta^2_{\Phi^*}(\bxi_1,\bxi_2) = \|\bU^*(\bxi_1)-\bU^*(\bxi_2)\|^2.$
From the definition of $\bU^*$ and since $\bxi(t)=\nabla\Phi(\bx)$ we obtain
$$\delta^2_{\Phi^*}(\bxi_1,\bxi_2) = \|\bU^*(\bxi_1)-\bU^*(\bxi_2)\|^2 = \|\bU(\bx_1)-\bU(\bx_2)\|^2 = \delta^2_\Phi(\bx_1,\bx_2).$$
And thus the assertion is verified.
\end{proof}

{\bf Acknowledgments:} I would like to thank Soumalya Mukhopadhyay  for bringing  references \cite{BC} up to my attention.


\begin{thebibliography}{}
\bibitem{A} Arnold, V.I. (1978). {\it Mathematical Methods of Classical Mechanics}, Springer, New York.
\bibitem{BC} Berger, R.L. and Casella, G. (1992). {\it Deriving Generalized Means as Least Squares and Maximum Likelihood Estimates},  The American Statistician, {\bf 46}, 279-282. 
\bibitem{BL} Borwein, J. M. and Lewis, A.S. (2000). {\it Convex Analysis and Non linear Optimization}, CMS Books in Mathematics, Springer, New York.
\bibitem{Br} Bregman, L. (1967). {\it The relaxation method of finding common points of convex sets and its application to the solution of problems in convex programming}, Comp. Math. Phys., USSR, {\bf 7}, 2100-217.
\bibitem{GH2} Gzyl, H.(2020). {\it Prediction in Riemannian metrics derived from divergence functions}, Comm. in Stats., https://doi.org/10.1080/03610926.2020.1752384.
\bibitem{GH3} Gzyl, H. (2020). {\it A predictive approach to generalized arithmetic means}, http://arxiv.org/abs/2005.07770.
\bibitem{H} Hicks, N. (1965). {\it Notes on Differential Geometry}, Van Nostrand, Princeton,.
\bibitem{M} Miller, J.D. (1984). {\it Some Global Inverse Function Theorems}, Journal of Mathematical Analysis and its Applications, {\bf 100}, 375-384.
\bibitem{N} Nielsen, F. (2018).  {\it An elementary introduction to information geometry},  https://arxiv.org/abs/1808.08271
\bibitem{St} Sternberg, S. (2012). {\it Curvature in Mathematics and Physics}, Dover Pubs. Inc., New York.
\end{thebibliography}
\end{document}